\theoremstyle{plain}
\newtheorem{definition}{Definition}
\newtheorem{thm}[definition]{Theorem}
\newtheorem{cor}[definition]{Corollary}
\newtheorem{rem}[definition]{Remark}
\title{A necessary and sufficient condition for a prime to be \\ an integer group determinant of certain $p$-groups}
\author{Yuka Yamaguchi and Naoya Yamaguchi}
\date{\today}
\begin{document}

\maketitle

\begin{abstract}
We give a necessary and sufficient condition for a prime to be an integer group determinant for an arbitrary abelian $p$-group of the form ${\rm C}_{p} \times H$, where ${\rm C}_{p}$ is the cyclic group of order $p$. 
Also, 
we show that under certain conditions, the integer group determinant of a finite group $G$ that is prime is the integer group determinant of the abelianization of $G$. 
As a result, we know that the integer group determinant of a $p$-group that is prime is the integer group determinant of its abelianization. 
\end{abstract}

\section{Introduction}
For a finite group $G$, 
let $x_{g}$ be a variable for each $g \in G$, 
and let $\mathbb{Z}[x_{g}]$ be the multivariate polynomial ring in the $x_{g}$ over $\mathbb{Z}$. 
The group determinant $\Theta_{G}(x_{g})$ of $G$ is defined as follows: 
$$
\Theta_{G}(x_{g}) := \det{(x_{g h^{- 1}})_{g, h \in G}} \in \mathbb{Z}[x_{g}]. 
$$
When the variables are all integer-valued, 
the group determinant is called an integer group determinant. 
Let $S(G)$ denote the set of all integer group determinants of $G$; 
that is, 
\begin{align*}
S(G) := \left\{ \Theta_{G}(a_{g}) \: | \: a_{g} \in \mathbb{Z} \right\}. 
\end{align*}
Let ${\rm C}_{n} := \{ \overline{1}, \overline{2}, \ldots, \overline{n} \}$ denote the cyclic group of order $n$, 
and let $p$ be a prime. 
It holds that $\{ m \in \mathbb{Z} \mid \gcd(m, n) = 1 \} \subset S({\rm C}_{n})$ from \cite[Lemma~2]{MR624127} or \cite[Theorem~1]{MR550657}. 
Also, 
it follows from \cite[Theorem~2]{MR550657} that $p \not\in S({\rm C}_{p^{n}})$. 
From the above, we obtain the following: 
{\it For any prime $q$, 
it holds that $q \in S({\rm C}_{p^{n}})$ if and only if $q^{p - 1} \equiv 1 \pmod{p}$.} 

In this paper, 
we give a necessary and sufficient condition for a prime to be an integer group determinant for an arbitrary abelian $p$-group of the form ${\rm C}_{p} \times H$. 
\begin{thm}\label{thm:1}
Let $G := {\rm C}_{p} \times H$ be an abelian $p$-group with $|H| = p^{n - 1} \: (n \geq 2)$. 
Then for any prime $q$, it holds that $q \in S(G)$ if and only if $q^{p - 1} \equiv 1 \pmod{p^{n}}$. 
\end{thm}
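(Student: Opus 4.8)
The plan is to use that $G$ is abelian, so the group determinant diagonalises over the dual group $\widehat{G}$. Writing $\alpha = \sum_{g\in G} a_{g}\,g \in \mathbb{Z}[G]$ and $L_{\chi}(\alpha) := \sum_{g\in G} a_{g}\,\chi(g)$ for $\chi\in\widehat{G}$, one has $\Theta_{G}(\alpha) = \prod_{\chi\in\widehat{G}} L_{\chi}(\alpha)$, while Fourier inversion $a_{g} = p^{-n}\sum_{\chi}\overline{\chi(g)}\,L_{\chi}(\alpha)$ turns the integrality $a_{g}\in\mathbb{Z}$ into the congruences $\sum_{\chi}\overline{\chi(g)}\,L_{\chi}(\alpha)\equiv 0\pmod{p^{n}}$. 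So the problem has two faces: (i) the product of the $L_{\chi}$ equals the prime $q$, and (ii) the family $(L_{\chi})$ is an integral Fourier transform. I would drive both directions from these facts.

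For necessity, the heart of the matter is the congruence
\[
\Theta_{G}(\alpha)\equiv s^{\,p^{n}}\pmod{p^{n}},\qquad s:=\sum_{g\in G}a_{g},
\]
valid for all $\alpha$ precisely because $G$ has a direct factor ${\rm C}_{p}$ (it fails for ${\rm C}_{p^{n}}$, matching the weaker cyclic criterion). Granting this, suppose $\Theta_{G}(\alpha)=q$. If $p\mid s$ the right side is $\equiv 0\pmod{p^{n}}$, forcing $p^{n}\mid q$, impossible for a prime with $n\geq 2$; hence $\gcd(s,p)=1$. Then $q\equiv s^{\,p^{n}}\pmod{p^{n}}$, and since $p^{n}(p-1)=p\,\varphi(p^{n})$ and $s^{\varphi(p^{n})}\equiv 1\pmod{p^{n}}$, raising to the $(p-1)$st power gives $q^{p-1}\equiv s^{\,p\,\varphi(p^{n})}\equiv 1\pmod{p^{n}}$, as claimed. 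To prove the displayed congruence I would write $\Theta_{G}(\alpha)=\det(sI+N)$ with $N$ multiplication by $\gamma:=\alpha-s e$ on $\mathbb{Z}[G]$ (so $\gamma$ lies in the augmentation ideal $I$), and study $\det(sI+N)\bmod p^{n}$ via its characteristic polynomial: the power sums satisfy $\operatorname{Tr}(N^{j})=p^{n}(\gamma^{j})_{e}\equiv 0\pmod{p^{n}}$, which already yields the congruence modulo $p$. The upgrade to modulus $p^{n}$ is exactly where the ${\rm C}_{p}$ factor enters, through the relation $(t-1)^{p}\in p\,I$ for $t$ a generator of ${\rm C}_{p}$, which I expect to furnish the extra cancellation among the coefficients indexed by multiples of $p$; equivalently one can exploit the factorisation $\Theta_{G}(\alpha)=\prod_{\phi\in\widehat{H}}\Theta_{{\rm C}_{p}}\bigl(\phi(\alpha_{0}),\dots,\phi(\alpha_{p-1})\bigr)$ arising from $\mathbb{Z}[G]=\mathbb{Z}[H][t]/(t^{p}-1)$, where $\alpha=\sum_{i}t^{i}\alpha_{i}$ with $\alpha_{i}\in\mathbb{Z}[H]$.

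For sufficiency, assume $q^{p-1}\equiv 1\pmod{p^{n}}$, i.e.\ $q$ lies in the order-$(p-1)$ subgroup $\mu_{p-1}$ of $(\mathbb{Z}/p^{n})^{\times}$. Since $\Theta_{G}$ is multiplicative and $\sigma:=\sum_{g\in G}g$ satisfies $L_{\chi}(\sigma)=p^{n}$ if $\chi=1$ and $0$ otherwise, I would seek $\alpha=\beta+m\sigma$, for which $L_{1}(\alpha)=\varepsilon(\beta)+mp^{n}$ and $L_{\chi}(\alpha)=L_{\chi}(\beta)$ for $\chi\neq 1$; if the nontrivial values $L_{\chi}(\beta)$ are units with product $+1$ then $\Theta_{G}(\alpha)=L_{1}(\alpha)$, and a suitable $m$ realises any $q\equiv\varepsilon(\beta)\pmod{p^{n}}$. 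The task thus reduces to producing, for each Teichm\"uller class $\omega\in\mu_{p-1}$, an integral $\beta$ whose nontrivial character values are units and whose augmentation is $\equiv\omega\pmod{p^{n}}$; here I would assemble the unit spectrum from cyclotomic units such as $(1-\zeta_{p}^{a})/(1-\zeta_{p})$, whose limit $a$ as $\zeta_{p}\to 1$ governs the augmentation, and propagate it across $\widehat{H}$ by induction on the structure of $H$. When $p=2$ the condition is merely $q\equiv 1\pmod{2^{n}}$ and one can take $L_{\chi}=1$ for every $\chi\neq 1$, giving the explicit $a_{g}=(q-1)/2^{n}+[g=e]$.

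The main obstacle is the congruence $\Theta_{G}(\alpha)\equiv s^{\,p^{n}}\pmod{p^{n}}$ in the necessity step: the easy reductions give it only modulo $p$, and one checks on small cases (e.g.\ ${\rm C}_{p}\times{\rm C}_{p^{2}}$) that the improvement to $p^{n}$ cannot be read off term by term from the power sums—there is genuine cancellation that must be traced back to the relation $(t-1)^{p}\in p\,I$ peculiar to the ${\rm C}_{p}$ factor. On the sufficiency side the corresponding difficulty is the uniform construction of $\beta$ realising an arbitrary Teichm\"uller class $\omega\in\mu_{p-1}$ over an arbitrary $H$; the case $\omega=\pm 1$ is handled transparently by $\alpha=m\sigma\pm e$, and the general case is the remaining technical core.
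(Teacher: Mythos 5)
Your necessity argument rests entirely on the congruence $\Theta_{G}(a_{g}) \equiv s^{p^{n}} \pmod{p^{n}}$, $s := \sum_{g} a_{g}$, claimed ``valid for all $\alpha$'' because of the ${\rm C}_{p}$ direct factor; this congruence is \emph{false}. Take $G = {\rm C}_{3} \times {\rm C}_{9}$ (so $p = 3$, $n = 3$), let $h$ be a generator of the ${\rm C}_{9}$ factor, and set $a_{e} = a_{h} = 1$, all other $a_{g} = 0$. Each ninth root of unity occurs exactly three times among the values $\chi(h)$, $\chi \in \widehat{G}$, so Dedekind's theorem gives
\[
\Theta_{G}(a_{g}) = \prod_{\chi \in \widehat{G}} \bigl(1 + \chi(h)\bigr) = \Bigl( \prod_{v=0}^{8} \bigl(1 + \omega^{v}\bigr) \Bigr)^{3} = 2^{3} = 8, \qquad \omega := \exp\Bigl(\tfrac{2\pi\sqrt{-1}}{9}\Bigr),
\]
whereas $s = 2$ and $2^{27} \equiv -1 \pmod{27}$, and $8 \not\equiv -1 \pmod{27}$. (Note $\gcd(s,p)=1$ here, so the failure is not confined to degenerate cases.) The structural reason is that the ${\rm C}_{p}$ factor buys exactly \emph{one} raise of the modulus: the true statement (the paper's Corollary~5 with $l=1$) is $\Theta_{G}(a_{g}) \equiv \Theta_{H}(b_{h})^{p} \pmod{p^{n}}$ with $b_{h} := \sum_{i} a_{(\overline{i},h)}$, and one may \emph{not} go on to replace the integer $\Theta_{H}(b_{h})$ by $s^{p^{n-1}}$ modulo $p^{n-1}$, since for a general (e.g.\ cyclic) $H$ one only has $\Theta_{H}(b_{h}) \equiv s^{p^{n-1}} \pmod{p}$; in the example above $\Theta_{{\rm C}_{9}}(b) = 2$ while $2^{9} \equiv 8 \pmod{9}$. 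Since the claimed congruence is simply untrue, no amount of work with $(t-1)^{p} \in p\,I$ can establish it, and your deduction of $q^{p-1} \equiv 1 \pmod{p^{n}}$ collapses. The paper's repair is to keep $\alpha := \Theta_{H}(b_{h})$ intact as an integer rather than reduce it to the augmentation: writing $q = \alpha\beta$, the complementary factor $\beta$ is a rational algebraic integer, hence $\alpha \in \{\pm 1, \pm q\}$; for odd $p$ the nontrivial factors pair into complex conjugates, so $\beta \geq 0$ and $\alpha \in \{1, q\}$; then $q \equiv \alpha^{p} \pmod{p^{n}}$ forces $q \equiv 1$ or $q \equiv q^{p}$, and either way $q^{p-1} \equiv 1 \pmod{p^{n}}$.

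The sufficiency direction is also incomplete in your proposal for odd $p$: your explicit construction handles $p=2$ (and the classes $\omega = \pm 1$ in general), but realizing an arbitrary class $\omega \in \mu_{p-1} \subset (\mathbb{Z}/p^{n}\mathbb{Z})^{\times}$ by an integral element whose nontrivial character values are units with product $1$ is exactly the hard part, which you yourself flag as ``the remaining technical core.'' The paper does not construct anything by hand; it quotes a known result (its Corollary~6): for any abelian $p$-group $G$ of order $p^{n}$ and any $a$ with $\gcd(a,p)=1$, every integer of the form $a^{p^{n-1}} - k p^{n}$ lies in $S(G)$. Since $q^{p-1} \equiv 1 \pmod{p^{n}}$ implies $q \equiv q^{p^{n-1}} \pmod{p^{n}}$, sufficiency is then immediate. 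To complete your route you would need either to import such a result or to actually carry out the cyclotomic-unit construction uniformly over arbitrary $H$ and arbitrary $\omega$.
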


We state two facts derived from Theorem~$\ref{thm:1}$. 
(i) For a finite group $G$ and an abelian subgroup $K$ of $G$, 
it holds that $S(G) \subset S(K)$ \cite[Theorem~1.4]{https://doi.org/10.48550/arxiv.2203.14420}. 
From this and the $H = {\rm C}_{p}$ case of Theorem~$\ref{thm:1}$, 
we can obtain the following theorem proved by Mahoney and Newman \cite[Theorem~3]{MR590367}: 
{\it Let $H$ be a finite abelian group and let $G := {\rm C}_{p^{l}} \times H$, 
where $p$ is a prime dividing $|H|$. 
Then for any prime $q \in S(G)$, 
we have $q^{p - 1} \equiv 1 \pmod{p^{2}}$.} 
By using this theorem, 
they \cite[Theorem~4]{MR590367} showed that for a finite abelian group $G$ of order $n$, 
it holds that $\{ m \in \mathbb{Z} \mid \gcd(m, n) = 1 \} \subset S(G)$ if and only if $G \cong {\rm C}_{n}$. 
(ii) A Wieferich prime is a prime number $p$ such that $2^{p - 1} \equiv 1 \pmod{p^{2}}$. 
From the $H = {\rm C}_{p}$ case of Theorem~$\ref{thm:1}$, 
we obtain the following corollary. 
\begin{cor}\label{cor:2}
It holds that $p$ is a Wieferich prime if and only if $2 \in S({\rm C}_{p} \times {\rm C}_{p})$. 
\end{cor}

Also, 
we show that under certain conditions, the integer group determinant of a finite group $G$ that is prime is the integer group determinant of the abelianization of $G$. 
For a finite group $G$, 
let $G' := [G : G]$ be the commutator group of $G$, 
let $S(G)_{\rm prime} := \{ m \in S(G) \mid m \: \: \text{is a prime} \}$, 
let $\widehat{G}$ be a complete set of representatives of the equivalence classes of irreducible representations of $G$ over $\mathbb{C}$, 
and let $\overline{G}$ be the set of degree one representations of $G$ over $\mathbb{C}$. 
Then we have the following theorem. 

\begin{thm}\label{thm:3}
Let $G$ be a finite group and suppose that there exists $d \geq 2$ satisfying $d \mid \deg{\varphi}$ for all $\varphi \in \widehat{G} \setminus \overline{G}$. 
If $|G|$ is odd or $d$ is even, 
then we have $S(G)_{\rm prime} \subset S(G/G')$. 
\end{thm}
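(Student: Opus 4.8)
The plan is to exploit the Frobenius--Dedekind factorization of the group determinant into its irreducible factors and to isolate the contribution of the linear characters, which reassembles into $\Theta_{G/G'}$. Writing $A_{\varphi}(x_g) := \sum_{g \in G} x_g\,\varphi(g)$ for each $\varphi \in \widehat{G}$, one has
$$
\Theta_G(x_g) = \prod_{\varphi \in \widehat{G}} \bigl(\det A_\varphi(x_g)\bigr)^{\deg \varphi}.
$$
The degree-one representations of $G$ are precisely those in $\overline{G}$, and they factor through $G/G'$; hence, collecting the variables over the cosets of $G'$ via $b_{\bar g} := \sum_{h \in \bar g} a_h$ shows that the product of the linear factors equals $\Theta_{G/G'}(b)$. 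I would therefore record, for any integers $a_g$, the identity
$$
\Theta_G(a_g) = \Theta_{G/G'}(b)\cdot B, \qquad B := \prod_{\varphi \in \widehat{G} \setminus \overline{G}} \bigl(\det A_\varphi(a_g)\bigr)^{\deg \varphi}.
$$

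Next I would use the divisibility hypothesis. Since $d \mid \deg\varphi$ for every $\varphi \in \widehat{G} \setminus \overline{G}$, the factor $B$ is a perfect $d$-th power: setting $C := \prod_{\varphi \in \widehat{G}\setminus\overline{G}} \bigl(\det A_\varphi(a_g)\bigr)^{\deg\varphi / d}$, we have $B = C^{d}$. The key claim is that $C$ is a \emph{rational} integer. Each $\det A_\varphi(a_g)$ is an algebraic integer, so $C$ is one. Moreover, for any field automorphism $\sigma$ of the cyclotomic field containing the character values, applying $\sigma$ entrywise gives $\sigma\bigl(\det A_\varphi(a_g)\bigr) = \det A_{\varphi^\sigma}(a_g)$, where $\varphi \mapsto \varphi^{\sigma}$ permutes $\widehat{G}\setminus\overline{G}$ and preserves degrees; reindexing the product then yields $\sigma(C) = C$. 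Thus $C \in \mathbb{Q}$, and an algebraic integer in $\mathbb{Q}$ lies in $\mathbb{Z}$.

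Now suppose $m \in S(G)_{\rm prime}$, say $\Theta_G(a_g) = m$. Since $m \neq 0$ we have $\Theta_{G/G'}(b) \neq 0$, and the identity reads $m = \Theta_{G/G'}(b)\,C^{d}$ with both factors in $\mathbb{Z}$. The crux is then purely arithmetic: $C^{d}$ divides the prime $m$, so $C^{d} \in \{\pm 1, \pm m\}$, and the value $\pm m$ is excluded because it would force the prime $m$ to equal $|C|^{d}$ with $|C|\geq 2$ and $d \geq 2$, a nontrivial perfect power. Hence $C = \pm 1$ and $\Theta_{G/G'}(b) = m/C^{d} = \pm m$. It remains only to fix the sign. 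If $d$ is even, then $C^{d} = 1$ and $\Theta_{G/G'}(b) = m$ directly. If instead $|G|$ is odd, then $|G/G'|$ is odd, and negating every variable gives $\Theta_{G/G'}(-b) = (-1)^{|G/G'|}\Theta_{G/G'}(b) = -\Theta_{G/G'}(b)$, so from $\Theta_{G/G'}(b) = -m$ one recovers $\Theta_{G/G'}(-b) = m$. In either case $m \in S(G/G')$.

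I expect the main obstacle to be the verification that $C$ is a genuine rational integer rather than merely an algebraic one; this is exactly where the hypothesis $d \mid \deg\varphi$ is used, to form the $d$-th root, and the Galois-invariance argument must be carried out so that the reindexing $\varphi \mapsto \varphi^{\sigma}$ preserves both the factors $\det A_\varphi$ and their exponents. The remaining sign analysis is routine once one sees that the dichotomy between ``$d$ even'' and ``$|G|$ odd'' corresponds precisely to the two ways of guaranteeing $m = \Theta_{G/G'}(\pm b)$, namely forcing $C^{d} = 1$ outright or flipping the sign of the abelian group determinant.
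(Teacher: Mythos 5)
Your proof is correct, and its overall skeleton coincides with the paper's: both split $\Theta_{G}$ via the Frobenius determinant theorem into the linear part, which reassembles into $\Theta_{G/G'}$, times the non-linear part, which under the hypothesis $d \mid \deg{\varphi}$ is a perfect $d$-th power $C^{d}$; both then observe that primality of $\Theta_{G}(a_{g})$ forces $C = \pm 1$ (a prime cannot be a nontrivial perfect $d$-th power), and both fix the sign in the same way (if $d$ is even then $C^{d} = 1$; if $|G|$ is odd then $|G/G'|$ is odd and negating all variables flips the sign of $\Theta_{G/G'}$, which is exactly the content of the paper's Remark~9). Where you genuinely diverge is the key integrality step, namely that $C \in \mathbb{Z}$. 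The paper proves this at the level of polynomials: each $\det\left( \sum_{g} x_{g} \varphi(g) \right)$ is monic in $x_{e}$, the non-linear product lies in $\mathbb{Z}[x_{g}]$ (being the quotient of $\Theta_{G}(x_{g})$ by the monic-in-$x_{e}$ polynomial $\Theta_{G/G'}(x_{gG'})$), and Remark~8 then extracts the $d$-th root inside $\mathbb{Z}[x_{g}]$, so integrality of $C$ follows by evaluation. You instead work with the evaluated scalar and use Galois descent: $C$ is an algebraic integer fixed by every automorphism of a suitable cyclotomic field, hence a rational integer. Your route is viable and arguably more conceptual, but it silently relies on two standard representation-theoretic facts that the paper's polynomial argument avoids entirely: (i) the representatives $\varphi$ can be chosen with entries in a cyclotomic field (Brauer realizability; this is harmless since $\det A_{\varphi}(a_{g})$ depends only on the equivalence class), so that applying $\sigma$ entrywise makes sense and $\varphi \mapsto \varphi^{\sigma}$ permutes the non-linear classes preserving degrees; and (ii) each $\det A_{\varphi}(a_{g})$ is an algebraic integer, which is not immediate from the matrix entries (a realization need not have algebraic-integer entries) but follows, for instance, because the eigenvalues of $\varphi\left( \sum_{g} a_{g} g \right)$ occur among the eigenvalues of the integer matrix of the regular representation. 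With those two points made explicit or properly cited, your proof is complete; the trade-off is that the paper's Remark~8 keeps the argument elementary and self-contained, while your version isolates a cleaner arithmetic fact about the evaluated irreducible factors.
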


Since $\deg{\varphi}$ devides $|G|$ for all $\varphi \in \widehat{G}$, 
we have the following. 

\begin{cor}\label{cor:4}
Let $G$ be a $p$-group. 
Then we have $S(G)_{\rm prime} \subset S(G/G')$. 
\end{cor}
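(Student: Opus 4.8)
The plan is to deduce Corollary~\ref{cor:4} directly from Theorem~\ref{thm:3} by exhibiting an admissible value of $d$. The key preliminary observation is the standard fact that the degree of every irreducible complex representation of a finite group divides the group order; for a $p$-group $G$ with $|G| = p^{k}$ this forces $\deg{\varphi}$ to be a power of $p$ for every $\varphi \in \widehat{G}$. Consequently, if $\varphi \in \widehat{G} \setminus \overline{G}$, then $\varphi$ is not a degree one representation, so $\deg{\varphi} > 1$, and being a power of $p$ it must therefore be divisible by $p$. This shows that $d := p$ satisfies $d \geq 2$ and $d \mid \deg{\varphi}$ for all $\varphi \in \widehat{G} \setminus \overline{G}$, which is precisely the first hypothesis of Theorem~\ref{thm:3}.

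The remaining task is to verify the parity condition ``$|G|$ is odd or $d$ is even'' for this choice of $d$, and here I would split on the parity of the prime $p$. If $p$ is odd, then $|G| = p^{k}$ is odd, so the first alternative holds. If $p = 2$, then $d = p = 2$ is even, so the second alternative holds. In either case the hypotheses of Theorem~\ref{thm:3} are met with $d = p$, and the theorem yields $S(G)_{\rm prime} \subset S(G/G')$, completing the argument.

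I do not anticipate a genuine obstacle, since the corollary is a specialization of Theorem~\ref{thm:3}; the only point requiring care is that a uniform choice of $d$ does not suffice to invoke the ``$|G|$ odd'' clause alone, because the case $p = 2$ produces a group of even order. This is exactly why the disjunctive hypothesis of Theorem~\ref{thm:3} is formulated with the alternative ``$d$ is even'': it is this clause that covers the $2$-group case, and the case analysis above is designed to make that dependence explicit.
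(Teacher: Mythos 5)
Your proposal is correct and matches the paper's intended argument exactly: the paper derives Corollary~\ref{cor:4} from Theorem~\ref{thm:3} via the remark that $\deg{\varphi}$ divides $|G|$, so for a $p$-group one takes $d = p$ (every nonlinear irreducible degree is a positive power of $p$), with the disjunction handled precisely as you describe ($|G|$ odd when $p$ is odd, $d$ even when $p = 2$). Your write-up simply makes explicit the case analysis the paper leaves implicit.
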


\section{Proof of Theorem~$\ref{thm:1}$}

For a finite abelian group $G$, 
Dedekind gave the irreducible factorization of $\Theta_{G}(x_{g})$ over $\mathbb{C}$ as follows: 
$\Theta_{G}(x_{g}) = \prod_{\chi \in \widehat{G}} \sum_{g \in G} \chi(g) x_{g}$. 
This is called Dedekind's theorem. 

To prove Theorem~$\ref{thm:1}$, we use the following two corollaries. 

\begin{cor}\label{cor:5}
Let $p$ be a prime and let $G := {\rm C}_{p^{l}} \times H$ with $|H| = p^{n - l} \: (n - l \geq 1)$. 
Then we have 
$$
\Theta_{G}(a_{g}) \equiv \Theta_{H}\left( \sum_{i = 1}^{p^{l}} a_{(\overline{i}, h)} \right)^{p^{l}} \pmod{p^{n - l + 1}}. 
$$
\end{cor}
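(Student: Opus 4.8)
The plan is to apply Dedekind's theorem and exploit the product structure of the characters of $G = {\rm C}_{p^l}\times H$. Writing $\widehat{{\rm C}_{p^l}} = \{\psi_0,\dots,\psi_{p^l-1}\}$ with $\psi_j(\overline{i}) = \zeta^{ij}$ for $\zeta := e^{2\pi\sqrt{-1}/p^l}$, every irreducible character of $G$ is $\psi_j\otimes\eta$ with $\eta\in\widehat{H}$. First I would substitute this into Dedekind's factorization and collect the factors according to $j$ rather than $\eta$: since
\begin{align*}
\sum_{(\overline{i},h)\in G}(\psi_j\otimes\eta)(\overline{i},h)\,a_{(\overline{i},h)} = \sum_{h\in H}\eta(h)\Big(\sum_{i=1}^{p^l}\zeta^{ij}a_{(\overline{i},h)}\Big) = \sum_{h\in H}\eta(h)\,d_h^{(j)},
\end{align*}
where $d_h^{(j)} := \sum_{i=1}^{p^l}\zeta^{ij}a_{(\overline{i},h)}$, taking the product over $\eta$ for fixed $j$ reproduces $\Theta_H(d^{(j)})$ by Dedekind applied to $H$. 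Hence
\begin{align*}
\Theta_G(a_g) = \prod_{j=0}^{p^l-1}\Theta_H\big(d^{(j)}\big),
\end{align*}
and the $j=0$ factor is exactly $\Theta_H(b)$ with $b_h = \sum_{i=1}^{p^l}a_{(\overline{i},h)}$. Thus, writing $m := n-l$, it suffices to prove $\prod_{j=1}^{p^l-1}\Theta_H(d^{(j)})\equiv\Theta_H(b)^{p^l-1}\pmod{p^{m+1}}$, since multiplying by the remaining factor $\Theta_H(b)$ preserves the congruence.

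Next I would set up the congruence engine. Let $\mathfrak p$ be the prime of $R := \mathbb{Z}[\zeta,\zeta_{|H|}]$ lying over $p$; since every $p$-power root of unity is $\equiv 1\pmod{\mathfrak p}$, we have $\zeta^{ij}\equiv 1$, so $d_h^{(j)}\equiv b_h\pmod{\mathfrak p}$ and therefore $\Theta_H(d^{(j)})\equiv\Theta_H(b)\pmod{\mathfrak p}$ for every $j$ (as $\Theta_H$ has integer coefficients). The difficulty is that this mod-$\mathfrak p$ estimate is far too weak by itself; the required power $p^{m+1}$ must be produced by combining factors. Two features do this. (i) Grouping the indices $j\in\{1,\dots,p^l-1\}$ according to the order $p^t$ of $\zeta^j$ collects each full Galois orbit together, so the corresponding partial product is fixed by $\mathrm{Gal}(\mathbb{Q}(\zeta)/\mathbb{Q})$ and hence is a rational integer; this lets me replace $\mathfrak p$-adic estimates by ordinary $p$-adic ones. (ii) Because $H$ is a $p$-group, $\Theta_H(d^{(j)}) = \prod_{\eta}\sum_h\eta(h)d_h^{(j)}$ is a product of $|H| = p^m$ factors each congruent to $\sum_h d_h^{(j)}$ modulo $\mathfrak p$, so reducing it modulo higher powers behaves like a $p^m$-th power; I would combine this with the ``freshman's dream'' boosting, namely that $\alpha\equiv\beta\pmod{p^s}$ implies $\alpha^{p}\equiv\beta^{p}\pmod{p^{s+1}}$, to upgrade the modulus by one power of $p$ for each factor of $p$ in the exponent.

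Concretely I would organize the whole argument as an induction on $l$ (with $H$, and hence $m$, fixed), which cleanly tracks the modulus. Let $N := \langle\overline{p^{l-1}}\rangle$ be the central subgroup of order $p$ inside ${\rm C}_{p^l}$, so that $G/N\cong{\rm C}_{p^{l-1}}\times H$. The inductive step splits $\widehat{G}$ into the $p^{l-1}|H|$ characters trivial on $N$, whose product of linear forms is $\Theta_{G/N}(a')$ for $a'_{(\overline{i},h)} := \sum_{x\in N}a_{(\overline{i},h)x}$, and the remaining characters; the goal of the step is the one-step congruence
\begin{align*}
\Theta_G(a_g)\equiv\Theta_{G/N}\big(a'\big)^{p}\pmod{p^{m+1}}.
\end{align*}
Granting it, the inductive hypothesis for ${\rm C}_{p^{l-1}}\times H$ gives $\Theta_{G/N}(a')\equiv\Theta_H(b)^{p^{l-1}}\pmod{p^{m+1}}$ (note $\sum_i a'_{(\overline{i},h)} = b_h$), and raising to the $p$-th power yields $\Theta_G(a_g)\equiv\Theta_H(b)^{p^l}\pmod{p^{m+1}}$, as desired; the base case $l=0$ is the identity $\Theta_H = \Theta_H$. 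Since both sides are rational integers and $p^{m+1}R\cap\mathbb{Z} = p^{m+1}\mathbb{Z}$, every congruence proved in $R$ descends to $\mathbb{Z}$.

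I expect the one-step congruence to be the main obstacle, as it is exactly where the factor $p^{m+1} = p^{n-l+1}$ must be extracted. The heart of the matter is to show that the product of the linear forms attached to the characters nontrivial on $N$ is congruent to $\Theta_{G/N}(a')^{p-1}$ modulo $p^{m+1}$. Here I would pair these characters into the Galois orbits of their nontrivial $N$-component, turn the resulting partial products into rational integers via the norm, and then apply the $p$-power boosting together with the $p^m$-factor structure of $\Theta_H$. The ${\rm C}_p\times{\rm C}_p$ case already illustrates the mechanism: the twisted linear forms differ from the untwisted ones by multiples of $p$, and $u'\equiv u\pmod p$ forces $u'^{p}\equiv u^{p}\pmod{p^2}$, which is precisely what makes the $p$-th powers of the twisted and untwisted forms agree modulo $p^{m+1}$.
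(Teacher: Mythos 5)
Your setup is sound: the Dedekind factorization $\Theta_G(a_g)=\prod_{j=0}^{p^l-1}\Theta_H(d^{(j)})$, the descent principle $p^{m+1}R\cap\mathbb{Z}=p^{m+1}\mathbb{Z}$, and the induction scaffolding are all correct. But there is a genuine gap exactly where you yourself flagged it: the ``one-step congruence'' $\Theta_G(a_g)\equiv\Theta_{G/N}(a')^{p}\pmod{p^{m+1}}$ is never proved, and for $l=1$ (where $N$ is the whole factor ${\rm C}_p$, so $G/N\cong H$ and $a'=b$) it \emph{is} the statement of the corollary, so the induction does not reduce the difficulty at all --- the entire arithmetic content is concentrated in that unproven step. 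Worse, the mechanism you sketch for it is flawed in two places. First, in the ${\rm C}_p\times{\rm C}_p$ illustration, the twisted linear forms do not differ from the untwisted ones by multiples of $p$: the differences $\sum_{i,h}(\zeta^{ij}-1)\eta(h)a_{(i,h)}$ lie in the prime ideal $\mathfrak{p}=(1-\zeta)$, and $(p)=\mathfrak{p}^{p-1}$, so ``multiples of $p$'' is true only for $p=2$. Second, the ``freshman's dream'' boosting applies to a genuine $p$-th power, not to a product of $p$ pairwise-congruent factors: from $\alpha_1,\dots,\alpha_p\equiv\beta\pmod{p^{s}}$ it does \emph{not} follow that $\alpha_1\cdots\alpha_p\equiv\beta^{p}\pmod{p^{s+1}}$ (take $\beta=1$, $\alpha_1=1+p^{s}$, $\alpha_2=\cdots=\alpha_p=1$). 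Passing from the mod-$\mathfrak{p}$ congruence on individual factors to a mod-$p^{m+1}$ congruence on the product requires controlling the elementary symmetric functions (equivalently, the norms) of the deviations modulo higher powers of $p$, and nothing in your proposal supplies that control.

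For comparison, the paper does not prove this from scratch either: its proof of the corollary is a two-line application of the $r=1$ case of a prior theorem of the authors (Theorem~2.3 of the cited reference MR3882290), which directly yields $\Theta_G(a_g)\equiv\bigl\{\prod F(1,\omega_{k_1}^{j_1},\dots,\omega_{k_s}^{j_s})\bigr\}^{\sum_{j=0}^{l}\varphi(p^j)}\pmod{p^{k_1+\cdots+k_s+1}}$; one then only has to note that the inner product equals $\Theta_H\bigl(\sum_i a_{(\overline{i},h)}\bigr)$ by Dedekind's theorem, that $\sum_{j=0}^{l}\varphi(p^j)=p^l$, and that $k_1+\cdots+k_s=n-l$. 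In other words, all of the hard $p$-adic work is delegated to that cited theorem, and what your blind attempt is missing is precisely a proof of (a special case of) it.
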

\begin{proof}
Let $H = {\rm C}_{p^{k_{1}}} \times \cdots \times {\rm C}_{p^{k_{s}}}$ and let 
$$
F(x, x_{1}, \ldots, x_{s}) := \sum_{i = 1}^{p^{l}} \sum_{i_{1} = 1}^{p^{k_{1}}} \cdots \sum_{i_{s} = 1}^{p^{k_{s}}} a_{(\overline{i}, \overline{i_{1}}, \ldots, \overline{i_{s}})} x^{i} x_{1}^{i_{1}} \cdots x_{s}^{i_{s}}. 
$$
Then from the $r = 1$ case of \cite[Theorem~2.3]{MR3882290}, 
we have 
$$
\Theta_{G}(a_{g}) \equiv \left\{ \prod_{j_{1} = 1}^{p^{k_{1}}} \cdots \prod_{j_{s} = 1}^{p^{k_{s}}} F(1, \omega_{k_{1}}^{j_{1}}, \ldots, \omega_{k_{s}}^{j_{s}}) \right\}^{\sum_{j = 0}^{l} \varphi(p^{j})} \pmod{p^{k_{1} + \cdots + k_{s} + 1}}, 
$$
where $\varphi$ is Euler's totient function and $\omega_{k} := \exp{\left( \frac{2 \pi \sqrt{- 1}}{p^{k}} \right)}$. 
Since 
$$
\prod_{j_{1} = 1}^{p^{k_{1}}} \cdots \prod_{j_{s} = 1}^{p^{k_{s}}} F(1, \omega_{k_{1}}^{j_{1}}, \ldots, \omega_{k_{s}}^{j_{s}}) = \Theta_{H}\left( \sum_{i = 1}^{p^{l}} a_{(\overline{i}, h)} \right), 
$$
the corollary is obtained. 
\end{proof}

\begin{cor}[{The $m = 1$ case of \cite[Lemma~1]{MR3522826}}]\label{cor:6}
Let $p$ be a prime, 
let $G$ be an abelian $p$-group of order $p^{n}$ and let $a \in \mathbb{Z}$ with $\gcd(a, p) = 1$. 
Then $a^{p^{n - 1}} - k p^{n} \in S(G)$ holds for any $k \in \mathbb{Z}$. 
\end{cor}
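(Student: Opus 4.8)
The statement is exactly the $m = 1$ specialization of \cite[Lemma~1]{MR3522826}, so one legitimate route is simply to invoke that result; the plan below indicates instead how I would establish it directly and where the real work lies.

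The natural starting point is Dedekind's theorem, which writes $\Theta_{G}(a_{g}) = \prod_{\chi \in \widehat{G}} L_{\chi}$ with $L_{\chi} := \sum_{g \in G} \chi(g) a_{g}$. First I would produce the base value $a^{p^{n-1}}$. Fix a subgroup $K \cong {\rm C}_{p}$ of $G$ (one exists since $G$ is a nontrivial $p$-group) and take the $a_{g}$ supported on $K$. Then each $L_{\chi}$ depends only on $\chi|_{K}$, and the restriction map $\widehat{G} \to \widehat{K}$ is surjective with every fibre of size $[G:K] = p^{n-1}$, so the product collapses to $\Theta_{G}(a_{g}) = \Theta_{{\rm C}_{p}}(b)^{p^{n-1}}$, where $b$ is the restriction of $(a_{g})$ to $K$. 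Since $\{ m \in \mathbb{Z} \mid \gcd(m,p) = 1 \} \subset S({\rm C}_{p})$, one may force $\Theta_{{\rm C}_{p}}(b) = a$, giving $a^{p^{n-1}} \in S(G)$; more generally $c^{p^{n-1}} \in S(G)$ for every $c$ coprime to $p$, and because $c \equiv a \pmod{p}$ forces $c^{p^{n-1}} \equiv a^{p^{n-1}} \pmod{p^{n}}$ (the kernel of the $p^{n-1}$-power map on $(\mathbb{Z}/p^{n})^{\times}$ is $1 + p\mathbb{Z}/p^{n}\mathbb{Z}$), all of these values already lie in the target class $a^{p^{n-1}} + p^{n}\mathbb{Z}$.

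The remaining and genuinely harder task is to fill the \emph{entire} progression $\{ a^{p^{n-1}} - k p^{n} \mid k \in \mathbb{Z} \}$ rather than the sparse set of $p^{n-1}$-th powers produced above. The difficulty is structural: any configuration supported on a cyclic subgroup is too symmetric and must yield a perfect $p^{n-1}$-th power, so to obtain additive steps of size exactly $p^{n}$ one has to break this symmetry. In the cyclic case $G = {\rm C}_{p^{n}}$ I would write $\Theta_{G}(x) = \prod_{m=0}^{n} N_{\mathbb{Q}(\zeta_{p^{m}})/\mathbb{Q}}(P(\zeta_{p^{m}}))$ for $P(t) = \sum_{i} x_{i} t^{i}$, use the coprime factorization $t^{p^{n}} - 1 = (t^{p^{n-1}} - 1)\Phi_{p^{n}}(t)$ to take $P$ congruent to $a$ modulo $t^{p^{n-1}} - 1$ (so the factors of order at most $p^{n-1}$ contribute exactly $a^{p^{n-1}}$), and then tune $P \bmod \Phi_{p^{n}}$ so that the top cyclotomic norm $N_{\mathbb{Q}(\zeta_{p^{n}})/\mathbb{Q}}(P(\zeta_{p^{n}}))$ runs through $1 + p^{n}\mathbb{Z}$; here one exploits that $1 - \zeta_{p^{n}}$ has norm $p$ and that suitable one-parameter perturbations shift the norm by controlled multiples of $p^{n}$.

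I expect the crux to be exactly this last tuning, together with the passage from the cyclic group to an arbitrary abelian $p$-group. For the norm one must check that the achievable step is \emph{exactly} $p^{n}$ and not coarser, which is where the Chinese-remainder index between the two cyclotomic factors — a power of $p$ — has to be accounted for; this bookkeeping is the main obstacle. For non-cyclic $G$ the support-on-a-cyclic-subgroup device reintroduces an unwanted outer $p$-th power, so I would instead build the analogous multivariate configuration on $G$ directly, separating the degree-one characters that are trivial on a fixed ${\rm C}_{p}$-quotient from the rest. Most economically, however, one simply cites \cite[Lemma~1]{MR3522826} with $m = 1$, whose proof carries out precisely this computation.
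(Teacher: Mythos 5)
The paper gives no proof of this corollary at all: the statement is presented as precisely the $m=1$ case of \cite[Lemma~1]{MR3522826}, and the acknowledgements confirm that this reference ``eliminates the need to write a proof.'' Your fallback of simply invoking that lemma is therefore exactly the paper's approach; your supplementary direct sketch is avowedly incomplete (the norm-tuning step and the passage to non-cyclic $G$ are left open), but since you ultimately rest the claim on the citation, the proposal is sound.
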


\begin{rem}\label{rem:7}
From Corollary~$\ref{cor:6}$, 
for any abelian $p$-group $G$ of order $p^{n}$, 
it holds that if $q$ is a prime with $q^{p - 1} \equiv 1 \pmod{p^{n}}$, 
then $q \in S(G)$ since there exists $k \in \mathbb{Z}$ satisfying $q = q^{p^{n - 1	}} - k p^{n}$. 
\end{rem}

\begin{proof}[Proof of Theorem~$\ref{thm:1}$]
If $q = p$, then the statement holds since $q \not\in S(G)$ (\cite[Theorem~1]{MR590367}) and $q^{p - 1} \not\equiv 1 \pmod{p^{n}}$. 
Below, we suppose that $q \neq p$. 
We can see the sufficiency from Remark~$\ref{rem:7}$. 
We prove the necessity. 
From \cite[Theorem~1.1]{MR4526227}, 
we have 
\begin{align*}
\Theta_{G}(a_{g}) = \prod_{\chi \in \widehat{{\rm C}_{p}}} \Theta_{H}\left( \sum_{i = 1}^{p} \chi(\overline{i}) a_{(\overline{i}, h)} \right). 
\end{align*}
Let $\alpha := \Theta_{H}\left( \sum_{i = 1}^{p} a_{(\overline{i}, h)} \right)$ and $\beta := \prod_{\chi \in \widehat{{\rm C}_{p}} \setminus \{ \chi_{0} \}} \Theta_{H}\left( \sum_{i = 1}^{p} \chi(\overline{i}) a_{(\overline{i}, h)} \right)$, 
where $\chi_{0}$ is the trivial character of ${\rm C}_{p}$, 
so that we can write as $\Theta_{G}(a_{g}) = \alpha \beta$. 
Suppose that there exist $a_{g} \in \mathbb{Z}$ satisfying $\Theta_{G}(a_{g}) = q$. 
Then, $\alpha = \pm q$ or $\pm 1$. 
Also, from the $l = 1$ case of Corollary~$\ref{cor:5}$, 
we have 
\begin{align*}
q = \Theta_{G}(a_{g}) \equiv \alpha^{p} \pmod{p^{n}}. 
\end{align*}
Therefore, if $p = 2$, then $q^{p - 1} \equiv 1 \pmod{p^{n}}$ holds. 
Note that if $p$ is an odd prime, 
then $\beta \geq 0$. 
To see this, observe that $\beta_{p - j} = \overline{\beta_{j}}$, 
where $\chi_{j} \colon {\rm C}_{p} \to \mathbb{C}; \overline{1} \mapsto \exp{\left( \frac{2 \pi \sqrt{- 1}}{p}j \right)}$ and $\beta_{j} := \Theta_{H}\left( \sum_{i = 1}^{p} \chi_{j}(\overline{i}) a_{(\overline{i}, h)} \right)$, and 
$\beta = \beta_{1} \beta_{2} \cdots \beta_{p - 1} = \beta_{1} \beta_{2} \cdots \beta_{\frac{p - 1}{2}} \overline{\beta_{1} \beta_{2} \cdots \beta_{\frac{p - 1}{2}}}$. 
It implies that if $p$ is an odd prime, 
then $\alpha = q$ or $1$, 
and therefore we have $q^{p - 1} \equiv 1 \pmod{p^{n}}$. 
\end{proof}

\section{Proof of Theorem~$\ref{thm:3}$}

To prove Theorem~$\ref{thm:3}$, 
we use the following remarks and Frobenius determinant theorem. 
\begin{rem}\label{rem:8}
Let $G$ be a finite group, 
let $e$ be the unit element of $G$, 
let $\mathbb{C}[x_{g}; g \in G \setminus \{ e \}]$ be the multivariate polynomial ring in the $x_{g} \: (g \in G \setminus \{ e \})$ over $\mathbb{C}$ and let 
$$
f(x_{e}) := x_{e}^{m} + a_{m - 1} x_{e}^{m - 1} + \cdots + a_{0} \: \: (a_{i} \in \mathbb{C}[x_{g}; g \in G \setminus \{ e \}]). 
$$
If $f(x_{e})^{k} \in \mathbb{Z}[x_{g}]$, 
then we have $f(x_{e}) \in \mathbb{Z}[x_{g}]$. 
\end{rem}

\begin{rem}\label{rem:9}
Let $G$ be a finite group of odd order. 
Then $- 1 \in S(G)$. 
Therefore, if $m \in S(G)$, then $- m \in S(G)$ since $S(G)$ is a monoid. 
\end{rem}

For a finite group $G$, Frobenius \cite{Frobenius1968gruppen} gave the irreducible factorization of $\Theta_{G}(x_{g})$ over $\mathbb{C}$ as follows: 
$
\Theta_{G}(x_{g}) = \prod_{\varphi \in \widehat{G}} \det{( \sum_{g \in G} x_{g} \varphi(g) )}^{\deg{\varphi}}. 
$
This is called Frobenius determinant theorem. 

\begin{proof}[Proof of Theorem~$\ref{thm:3}$]
Let $\pi \colon G \to G/G'$ be the canonical projection. 
Then we have $\overline{G} = \{ \psi \circ \pi \mid \psi \in G/G' \}$. 
From this and Frobenius determinant theorem, 
we have 
\begin{align*}
\Theta_{G}(x_{g}) 
&= \left( \prod_{\chi \in \overline{G}} \sum_{g \in G} \chi(g) x_{g} \right) \prod_{\varphi \in \widehat{G} \setminus \overline{G}} \det{\left( \sum_{g \in G} x_{g} \varphi(g) \right)}^{\deg{\varphi}} \\ 
&= \Theta_{G/G'}(x_{g G'}) \prod_{\varphi \in \widehat{G} \setminus \overline{G}} \det{\left( \sum_{g \in G} x_{g} \varphi(g) \right)}^{\deg{\varphi}}, 
\end{align*}
where $x_{g G'} := \sum_{h \in G'} x_{g h}$. 
Suppose that there exist $d \geq 2$ satisfying $d \mid \deg{\varphi}$ for all $\varphi \in \widehat{G} \setminus \overline{G}$. 
Then we have 
$$
\prod_{\varphi \in \widehat{G} \setminus \overline{G}} \det{\left( \sum_{g \in G} x_{g} \varphi(g) \right)}^{\deg{\varphi}} = \left\{ \prod_{\varphi \in \widehat{G} \setminus \overline{G}} \det{\left( \sum_{g \in G} x_{g} \varphi(g) \right)}^{k_{\varphi}} \right\}^{d}, 
$$
where $\deg{\varphi} = d k_{\varphi}$. 
Since $\Theta_{G}(x_{g})$, $\Theta_{G/G'}(x_{g G'}) \in \mathbb{Z}[x_{g}]$ hold, 
we have 
$$
\prod_{\varphi \in \widehat{G} \setminus \overline{G}} \det{\left( \sum_{g \in G} x_{g} \varphi(g) \right)}^{\deg{\varphi}} \in \mathbb{Z}[x_{g}]. 
$$
Thus, we have $\prod_{\varphi \in \widehat{G} \setminus \overline{G}} \det{\left( \sum_{g \in G} x_{g} \varphi(g) \right)}^{k_{\varphi}} \in \mathbb{Z}[x_{g}]$ from Remark~$\ref{rem:8}$. 
Suppose that $\Theta_{G}(a_{g})$ is a prime. 
Then from $d \geq 2$, 
we have $\prod_{\varphi \in \widehat{G} \setminus \overline{G}} \det{\left( \sum_{g \in G} a_{g} \varphi(g) \right)}^{k_{\varphi}} = \pm 1$. 
Therefore, if $|G|$ is odd, then we have $\Theta_{G}(a_{g}) = \pm \Theta_{G/ G'}(a_{g G'}) \in S(G / G')$ from Remark~$\ref{rem:9}$ since $| G / G' |$ is odd. 
If $d$ is even, then we have $\Theta_{G}(a_{g}) = \Theta_{G/ G'}(a_{g G'}) \in S(G / G')$. 
\end{proof}

\clearpage

\begin{center}
Acknowledgements
\end{center}

We would like to thank Christopher Pinner for referring us to some references. 
One of the references eliminates the need to write a proof of Corollary~$\ref{cor:6}$. 

\bibliography{reference}
\bibliographystyle{plain}

\medskip
\begin{flushleft}
Faculty of Education, 
University of Miyazaki, 
1-1 Gakuen Kibanadai-nishi, 
Miyazaki 889-2192, 
Japan \\ 
{\it Email address}, Yuka Yamaguchi: y-yamaguchi@cc.miyazaki-u.ac.jp \\ 
{\it Email address}, Naoya Yamaguchi: n-yamaguchi@cc.miyazaki-u.ac.jp 
\end{flushleft}

\end{document}